\newcommand{\R}{\mathbb{R}}
\newcommand{\N}{\mathbb{N}}
\newcommand{\Z}{\mathbb{Z}}
\newcommand{\T}{\mathbb{T}}
\newcommand{\e}{\epsilon}
\newcommand{\h}{\eta}
\newcommand{\dist}{\textrm{dist}}
\newcommand{\sub}{\subseteq}
\newcommand{\lf}{\left\lfloor}
\newcommand{\rf}{\right\rfloor}
\newcommand{\ldav}{\left| \left|}
\newcommand{\rdav}{\right| \right|}
\newcommand{\ldb}{\left[\!\left[}
\newcommand{\rdb}{\right]\!\right]}
\newcommand{\oo}{\infty}
\newcommand{\lc}{\left(} 
\newcommand{\rc}{\right)}
\newcommand{\lb}{\left[}
\newcommand{\rb}{\right]}
\newcommand{\lfp}{\left\{}
\newcommand{\rfp}{\right\}}
\newcommand{\mc}{\mathcal}
\newcommand{\mf}{\mathfrak}
\numberwithin{thm}{section}
	\numberwithin{theorem}{subsection}
\numberwithin{proposition}{subsection}
\numberwithin{lemma}{subsection}
\numberwithin{definition}{subsection}
\numberwithin{corollary}{subsection}
\numberwithin{remark}{subsection}
\numberwithin{note}{subsection}
\title{Danzer's Problem, Effective Constructions of Dense Forests and Digital Sequences}
\author{ Ioannis Tsokanos}
\date{}
\begin{document}
		
	\maketitle
	
	\begin{abstract}
		A 1965 problem due to Danzer asks whether there exists a set in Euclidean space with finite density  intersecting any convex body of volume one.
		A recent approach to this problem is concerned with the construction of 
		\emph{dense forests} and is obtained by a suitable weakening of the volume constraint. A dense forest is a discrete point set of finite density getting uniformly close to long enough line segments. The distribution of points in a dense forest is then quantified in terms of a visibility function.
		
		 Another way to weaken the assumptions in Danzer's problem is by relaxing the density constraint.  In this respect, a new concept is introduced in this paper, namely that of an \textit{optical forest}. An optical forest in $\R^{d}$ is a point set with optimal visibility but not necessarily with finite density.
		
		In the literature, the best constructions  of Danzer sets and dense forests lack effectivity. The goal of this paper is to provide constructions of dense and optical forests which yield the best known results in any dimension $d \ge 2$ both in terms of visibility and density bounds and effectiveness.  
		 Namely, there are three main results in this work: (1) the construction of a dense forest  with the best known visibility bound which, furthermore, enjoys the property of being deterministic; (2) the deterministic construction of an optical forest with a density failing to be finite only up to a logarithm  and (3) the construction of a planar \emph{Peres-type} forest (that is, a dense forest obtained from a construction  due to Peres)  with the best known visibility bound. This is achieved by constructing a deterministic digital sequence satisfying strong dispersion properties. 
	\end{abstract}

	\section{Introduction}
	
		In what follows, the cardinality of a set $A$ will be denoted by  $|A|$ or $\#A$. The norms $\ldav \cdot \rdav_{2}$, $\ldav \cdot \rdav_{\oo}$ stand for the Euclidean and supremum norm in $\R^{d}$, respectively. Similarly, given $\bm{x} \in \R^{d}$ and $r\;>0$, the balls $B_{2}(\bm{x},r)$ and $B_{\oo}(\bm{x},r)$ with center $\bm{x}$ and radius $r$ are taken with respect to the Euclidean and supremum norm, respectively. The unit torus is denoted by $\T = \R/\Z$.  Given $a,b \in \R$, let $\ldb a,b \rdb = \lfp n \in \Z: a\le n \le b \rfp$ be the integer interval with endpoints $a$ and $b$. If $a=1$ one may write $\ldb b \rdb = \ldb 1, b \rdb$. The set of positive integers is denoted by $\N$ and the set of non-negative integers by $\N_{0}$. Given two functions $f,g : \R \to \R^{+}$,  the  asymptotic notation 
	$ f(x) \ll g(x)$ (equivalently $f(x) = O\lc g(x) \rc$) denotes the existence of a constant $C\;>0$ such that $ f(x) \le C \cdot g(x)$ for every $x$. If the constant $C$ depends on some parameter, say $t$, then this is denoted by indexing the notation, for instance $f(x) = O_{t}\lc g(x)\rc$. Finally, given two subsets $A,B \sub \R^{d+1}$, define the distance between $A$ and $B$ as $\dist\lc A,B\rc = \inf\lfp \ldav a - b \rdav_{2} : a \in A , b \in B \rfp.$
	If one of the sets  contains only one element, say $A= \lfp a\rfp$, then one may write $\dist\lc a, B\rc$.
	
\paragraph{}
	Let $d\ge 2$ be a natural number which, throughout the paper, stands for a dimension. A subset $Y\sub \R^{d}$ has growth rate $g(T)$, where $g: \R^{+} \to \R^{+}$, if 
	$$ \# \lc B_{2}(0,T) \cap Y \rc = O\lc g(T) \rc .$$
	Moreover, a subset $Y\sub \R^{d}$ has finite density if its growth rate is $O\lc T^{d} \rc$.
	
	 A subset $\mf{D} \sub \R^{d}$ is a Danzer set if $\mf{D}$ intersects every  convex body of volume $1$. In this case,  $\mf{D}$ is said to satisfy the Danzer property. In 1965, Danzer asked whether there exists a Danzer set of finite density in $\R^{d}$. This problem is still open (see the survey of Adiceam \cite{AdiceamAroundTheDanzerProblem} for more details).
	
	One can tackle the Danzer problem either by relaxing the density constraint, that is by allowing sets satisfying the Danzer property with growth rate larger than $O\lc T^{d} \rc$, or by studying the weaker concept of dense forests obtained by a suitable relaxation of the volume constraint.

	\begin{definition}[Dense Forest]\label{DefDenseForest}
			A set $\mathfrak{F} \sub \R^{d}$ is a dense forest if it has finite density and if it satisfies the following property: there exists a decreasing function $V:(0,1) \to \R^{+}$  tending to $+\oo$ as $\e \to 0^{+}$ such that for any $\e \in (0,1)$ and any line segment  $L \sub R^{d}$ with length $V(\e)$, there is a point $\bm{x} = \bm{x}(L) \in\mf{F}$ such that $\dist\lc \bm{x},L \rc \le \e$. The function $V$ is said to be a visibility function  for the dense forest $\mf{F}$.
		
	\end{definition}
The problem of constructing dense forests is about the existence of a dense forest in $\R^{d}$ with visiblity function $V(\e) = O\lc \e^{-(d-1)} \rc$, which bound is the best one can hope for\footnote{To see this, first note that one can replace the Euclidean norm in the definition of the dense forest by the sup norm. This change affects the visibility of a given forest only up to a constant. A similar remark can be made for the definition of the growth rate of a set, where one can replace the Euclidean ball of radius $T$ centered at the origin with the ball of radius $T$ with respect to the sup norm centered at the origin.
	
	Now, assume that a given dense forest $\mf{F}$ in $\R^{d}$ has visibility $V $.  Fix $\e\;>0$ and set $C_{\e}$ to be the hypercube  centered at the origin $\bm{0}$ with sidelength $V(\e)$; that is $C_{\e} = B_{\oo}(\bm{0},V(\e)/2)$. Decompose the hypercube $C_{\e}$ into axes parallel boxes which have $d-1$ sides of length $\e$ and one side of length $V(\e)$. From the definition of a dense forest, any such  box contains at least one point from $\mf{F}$. This yields that 
	$$ {\e^{-(d-1)} \over V(\e)} \le \#\lc B_{\oo}(\bm{0},V(\e)/2)\cap\mf{F} \over  V(\e)^{d} \rc \ll 1 .$$
	Therefore, one obtains that $ V(\e) \gg \e^{-(d-1)}$.}.

In the definition of a dense forest in $\R^{d}$, one fixes the density of the point set  and allows its visibility to grow to infinity faster than $O\lc \e^{-(d-1)} \rc$ as $\e \to 0^{+}$. In the definition of an optical forest introduced below, one fixes the  visibility of the forest to be optimal and allows its growth rate to be ``larger'' than $O\lc T^{d} \rc$; that is, the forest has not necessarily finite density.

\begin{definition}[Optical Forest]\label{DefOpticalForest}
Set $g: \R^{+} \to \R^{+}$ be such that $ g(T) \gg T^{d}$.
	A set $\mathfrak{F} \sub \R^{d}$ is an optical forest with growth rate $g: \R^{+} \to \R^{+}$ if it has growth rate $g$ and if for every $\e \in (0,1)$ and every line segment of length $O\lc \e^{-(d-1)} \rc$, there is a point $\bm{x} = \bm{x}(L) \in \mf{F}$ such that $\dist\lc \bm{x},L\rc \le \e$. 
\end{definition}
 The problem of constructing optical forests is concerned with the existence of such a point set  in $\R^{d}$ with growth rate as close as possible to the optimal bound $O\lc T^{d} \rc$.

\paragraph{}
	
It is easy to see that a Danzer set of finite density is a dense forest with optimal visibility. Similarly, a Danzer set of finite density  is an optical forest with optimal growth rate $g(T) = O\lc T^{d} \rc$.
For more details regarding the connection between Danzer's problem and that of dense forests, see  \cite[Sections 3 \& 4]{AdiceamAroundTheDanzerProblem}.
 In the planar case, the three notions are equivalent; however, this is not true in higher dimensions  \cite[p. 12]{AdiceamAroundTheDanzerProblem}.  
	
	The first goal of this paper is to provide effective constructions of (1) dense forests with almost optimal visibility bounds (in a suitable sense) and of (2) optical forests with almost optimal growth rate bounds (in a suitable sense). Regarding the construction of dense forests, the best known result is a  purely probabilistic planar construction due to Alon \cite{Alonudfwpv} with visibility 
	$ V(\e) = O\lc \e^{-1}\cdot 2^{O\lc \sqrt{\ln\lc \e^{-1} \rc} \rc} \rc $.
	Alon's forest enjoys the extra property of being uniformly discrete; that is, there exists a uniform positive lower bound for the distance between any two distinct points in the forest.
	
\paragraph{}		
	The main result of this paper yields a completely effective construction of dense forests with almost optimal visibility in all dimensions $d\ge 2$. 
	
	\begin{theorem}\label{TheorConvergenceConstructions}
		Let $V:(0,1) \mapsto \R^{+}$ be a decreasing function such that $V(\e) {\longrightarrow} +\oo$ as $\e\to 0^{+}$. Assume that there exists a decreasing sequence $\lc e_{j} \rc_{j\ge 1}$ in $(0,1)$ with $ e_{j} \underset{j\to+\oo}{\longrightarrow} 0$ such that 
	$$  \sum_{j\ge1} { e_{j}^{-(d-1)} \over V\lc e_{j} \rc} \;< +\oo .$$
		Then, there exists a deterministic construction of  a dense forest in $\R^{d}$ with visibility function $W$ such that 
		$ W\lc \e \rc = V\lc e_{i} \rc  ,$
		where $i = i(\e)$ it the unique index such that $ e_{i}\le \e \;< \e_{i-1}$.
	\end{theorem}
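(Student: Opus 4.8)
The plan is to realise the forest as a superposition $\mf{F}=\bigcup_{j\ge1}\mf{F}_{j}$ of deterministic layers, the $j$--th layer being responsible for the scale $e_{j}$, the summability hypothesis being used precisely to keep the density of the superposition finite. Set $\delta_{j}:=e_{j}^{-(d-1)}/V(e_{j})$, so that $\sum_{j}\delta_{j}<+\oo$ by assumption (hence $\delta_{j}\to0$). The quantitative heart of the matter is a \emph{one--scale gadget}: for each $j$ one needs a deterministic point set $G_{j}$ inside a slab $\R^{d-1}\times[0,w_{j})$ of width $w_{j}=c_{d}V(e_{j})$ ($c_{d}>0$ a small constant) which has density $\ll_{d}\delta_{j}$ and which, within distance $e_{j}$, catches every chord of the slab joining its two bounding hyperplanes and having slant $\ll_{d}1$. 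The lower bound recorded in the footnote of the paper shows that density $\gg_{d}\delta_{j}$ is forced for such a gadget, so there is no slack; constructing $G_{j}$ is the step I expect to be the main obstacle, since a naive union of direction--adapted lattices is far too expensive (the obstruction being segments nearly parallel to a lattice axis). The efficient construction should proceed by induction on the dimension, reducing a crossing chord to an affine ``trace'' through a lower--dimensional slice and then invoking a deterministic digital (van der Corput--type) sequence with strong dispersion along such traces; this is where the ``deterministic digital sequence'' of the paper enters.

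Granting $G_{j}$, the layers must be placed so as to reconcile two competing requirements: each scale must be present ``everywhere'' (so that every long segment is caught at the appropriate scale), yet every bounded region must meet only finitely many layers (so that $\mf{F}$ is locally finite, hence of finite density). I would do this as follows. Pick $\rho_{j}\asymp V(e_{j})$ with $\rho_{j}\le V(e_{j})/4$; note $\rho_{j}\to+\oo$. For each coordinate direction $k\in\lfp1,\dots,d\rfp$ consider the slabs of width $w_{j}$ perpendicular to the $k$--th axis, spaced periodically with period $2w_{j}$, place in each such slab a congruent copy of $G_{j}$, and let $\mf{F}_{j}$ be the union, over $k$ and over these slabs, of the resulting points lying outside $B_{2}(\bzero,\rho_{j})$. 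Then $\mf{F}_{j}$ has density $\ll_{d}\delta_{j}$. Since $\rho_{j}\to+\oo$, every ball $B_{2}(\bzero,T)$ meets the support of $\mf{F}_{j}$ for only finitely many $j$, so $\mf{F}:=\bigcup_{j}\mf{F}_{j}$ is locally finite; and, bounding the contribution of $\mf{F}_{j}$ by $\ll_{d}\lc1+\delta_{j}T^{d}\rc$,
$$\#\lc B_{2}(\bzero,T)\cap\mf{F}\rc\;\ll_{d}\;\#\lfp j:\rho_{j}<T\rfp\,+\,T^{d}\sum_{j\ge1}\delta_{j}\;\ll_{d}\;T^{d},$$
the last step because $\rho_{j}<T$ forces $\delta_{j}\gg T^{-1}$, so that index set has $\ll T\sum_{j}\delta_{j}$ elements. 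Hence $\mf{F}$ has finite density.

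For the visibility, fix $\e\in(0,1)$ and let $i=i(\e)$ be the index with $e_{i}\le\e<e_{i-1}$ (with the convention $e_{0}=1$, so that $i(\e)$ is defined on all of $(0,1)$, is non--increasing in $\e$, and $i(\e)\to+\oo$ as $\e\to0^{+}$). Let $L$ be a line segment of length $V(e_{i})$. Since $V(e_{i})>2\rho_{i}$ and $B_{2}(\bzero,\rho_{i})$ is convex, $L\cap B_{2}(\bzero,\rho_{i})$ is a single sub--segment of length $\le2\rho_{i}$, so $L$ contains a sub--segment $L'$ of length $\ge V(e_{i})-2\rho_{i}\ge V(e_{i})/2$ lying where $\mf{F}_{i}$ is present. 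Let $k$ be a coordinate along which $L'$ has extent $\ge V(e_{i})/(2\sqrt{d})$; for $c_{d}$ small enough the $x_{k}$--range of $L'$ then covers at least one full period of the $k$--slabs, so $L'$ contains a sub--segment $L''$ that crosses one $k$--slab of scale $e_{i}$ from face to face, with slant $\ll_{d}1$. By the defining property of $G_{i}$ there is $\bm{x}\in\mf{F}_{i}\sub\mf{F}$ with $\dist\lc\bm{x},L''\rc\le e_{i}$, whence $\dist\lc\bm{x},L\rc\le e_{i}\le\e$. Thus $W(\e):=V(e_{i(\e)})$ is a visibility function for $\mf{F}$; it is decreasing because $V$ is decreasing and $i(\e)$ is non--increasing in $\e$, and $W(\e)\to+\oo$ as $\e\to0^{+}$ because $i(\e)\to+\oo$, $e_{i(\e)}\to0$ and $V\to+\oo$ at $0^{+}$.

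The remaining points are routine: one may replace the Euclidean norm by the sup--norm throughout at the cost of absolute constants (as explained in the footnote), which makes the slabs and the gadgets clean unions of axis--parallel boxes; the $d$--dependent constant in the density estimate is harmless since $\sum_{j}C_{d}\,\delta_{j}$ still converges; and a fixed scaling of each $G_{i}$ arranges that the distance $e_{i}$ is attained exactly. The only genuinely non--trivial input is the one--scale gadget of the first paragraph.
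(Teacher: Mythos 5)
Your overall architecture is exactly that of the paper: a superposition $\mf{F}=\bigcup_{j}\mf{F}_{j}$ of layers, the $j$-th layer living on periodically spaced slabs perpendicular to each of the $d$ coordinate axes at separation $\asymp V\lc e_{j}\rc$, points deleted near the origin so that $B_{2}(\bzero,T)$ meets only the finitely many layers with $V\lc e_{j}\rc\ll T$, the summability of $\delta_{j}=e_{j}^{-(d-1)}/V\lc e_{j}\rc$ giving finite density, and the visibility argument via the coordinate along which the segment has extent $\ge V\lc e_{i}\rc/(2\sqrt{d})$. All of that bookkeeping is correct and matches the paper's proof.

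The one genuine gap is that you declare the ``one-scale gadget'' $G_{j}$ to be the main obstacle, leave it unconstructed, and point toward an elaborate and unnecessary route (induction on dimension, digital sequences with dispersion along affine traces --- the digital sequence in the paper is used only for the separate Theorem 1.7 on Peres-type forests). In fact the gadget is trivial, and this is the whole point of the construction: take $G_{j}$ to be the $(d-1)$-dimensional grid of mesh $e_{j}/\sqrt{d}$ placed on one bounding hyperplane of the slab, i.e.\ the paper's set $S_{j}=\lfp \lc k\cdot V\lc e_{j}\rc,\, l_{2}e_{j}/\sqrt{d},\dots,l_{d}e_{j}/\sqrt{d}\rc\rfp$. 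Any chord crossing the slab from face to face must intersect that hyperplane, and the intersection point is within $\sqrt{d-1}\cdot e_{j}/(2\sqrt{d})<e_{j}$ of a grid point; the density per unit volume of slab is $\asymp e_{j}^{-(d-1)}/V\lc e_{j}\rc=\delta_{j}$, exactly the budget you allotted. The obstruction you worry about --- segments nearly parallel to a lattice axis --- never arises for the gadget, because (as your own visibility argument already arranges) each layer is only ever asked to catch segments that cross a slab transversally in the coordinate of largest extent; near-parallel segments are handled by the slabs perpendicular to a different axis. So the proof is salvageable in one line, but as written its self-declared crucial step is missing and the suggested way of filling it is a red herring.
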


	In \cite{Alonudfwpv},  Alon claims that by optimising his probabilistic construction one could prove the existence of a planar dense forest with visibility bound 
$ V(\e) = O\lc \e^{-1}\cdot \ln\lc \e^{-1} \rc \cdot \ln\ln\lc \e^{-1} \rc \rc.$
However, the author could not verify this claim and further discussions with Prof. Alon confirm that its validity is doubtful.
	
	By applying Theorem \ref{TheorConvergenceConstructions} to the function  
	$ V(\e) =  \e^{-(d-1)} \cdot \ln\lc \e^{-1} \rc \cdot \ln\ln\lc \e^{-1} \rc^{1 +\h} $
	for an arbitrary $\h \;>0$,  one obtains the following corollary, which stands as the best known  visibility bound  for a dense forest, in any dimension $d\ge 2$. Furthermore, the corresponding construction is deterministic.
	
	\begin{corollary}\label{CorConvergenceConstructions} Given $d \ge 2$, for every $\h\;>0$, there exists a deterministic  dense forest in $\R^{d}$ with visibility  
		$$ W\lc \e \rc =  O\lc \e^{-(d-1)} \cdot \ln\lc\e^{-1}\rc \cdot \ln\ln\lc\e^{-1} \rc^{1+\h} \rc.$$
	\end{corollary}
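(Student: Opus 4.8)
The plan is to apply Theorem \ref{TheorConvergenceConstructions} directly to the function
$V(\e) = \e^{-(d-1)} \cdot \ln\lc \e^{-1} \rc \cdot \ln\ln\lc \e^{-1} \rc^{1+\h}$, so the only real task is to exhibit a suitable decreasing sequence $\lc e_j \rc_{j\ge 1}$ in $(0,1)$ tending to $0$ for which $\sum_{j\ge 1} e_j^{-(d-1)}/V(e_j) < +\oo$. Substituting the definition of $V$, one has
$$ \frac{e_j^{-(d-1)}}{V(e_j)} = \frac{1}{\ln\lc e_j^{-1} \rc \cdot \ln\ln\lc e_j^{-1} \rc^{1+\h}} . $$
The natural choice is the geometric sequence $e_j = 2^{-j}$ (any fixed base $>1$ works), for which $\ln\lc e_j^{-1} \rc = j \ln 2$ and $\ln\ln\lc e_j^{-1} \rc = \ln(j\ln 2) = \ln j + O(1)$. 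Hence the general term is comparable to $1/\lc j (\ln j)^{1+\h} \rc$ for large $j$, and the Bertrand series $\sum_j 1/\lc j (\ln j)^{1+\h} \rc$ converges for every $\h > 0$ (by the integral test, since $\int^{\infty} dx/\lc x (\ln x)^{1+\h}\rc = (\ln x)^{-\h}/(-\h)$ is finite at $+\oo$). This establishes the hypothesis of Theorem \ref{TheorConvergenceConstructions}.

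Theorem \ref{TheorConvergenceConstructions} then yields a deterministic dense forest in $\R^d$ whose visibility function $W$ satisfies $W(\e) = V(e_i)$, where $i = i(\e)$ is the unique index with $e_i \le \e < e_{i-1}$. It remains to check that this step function $W$ is of the claimed order $O\lc \e^{-(d-1)} \cdot \ln\lc \e^{-1} \rc \cdot \ln\ln\lc \e^{-1} \rc^{1+\h} \rc$. Since $e_i \le \e < e_{i-1} = 2 e_i$, the quantities $\e^{-1}$ and $e_i^{-1}$ differ by at most a factor of $2$; because $V$ is of regular (polynomial-times-logarithmic) growth, $V(e_i)$ and $V(\e)$ differ only by a multiplicative constant (one may write $e_i^{-(d-1)} \le \e^{-(d-1)}$, $\ln\lc e_i^{-1}\rc \le \ln\lc \e^{-1}\rc + \ln 2 \ll \ln\lc \e^{-1}\rc$, and likewise for the iterated logarithm, valid once $\e^{-1}$ exceeds a suitable absolute threshold, which is harmless as $V$ is only relevant for small $\e$). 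Therefore $W(\e) = V(e_i) = O\lc V(\e) \rc$, which is exactly the asserted bound.

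I do not expect any genuine obstacle here: the corollary is a routine specialization, and the only points requiring a line of justification are the convergence of the Bertrand-type series (standard) and the comparison of the step function $W$ with the continuous envelope $V$ across a single geometric block (immediate from the bounded ratio $e_{i-1}/e_i$). The substantive content — the actual construction of the forest — is entirely contained in Theorem \ref{TheorConvergenceConstructions}.
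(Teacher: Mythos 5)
Your proof is correct and follows essentially the same route as the paper: the paper also applies Theorem \ref{TheorConvergenceConstructions} with $e_j = 2^{-j}$ and the same choice of $V$, and closes with the same observation that $V(e_i)/V(e_{i-1})$ is bounded (by $2^{d+1}$), so that the step function $W$ is $O(V(\e))$. You spell out the Bertrand-series convergence a little more explicitly than the paper does, which is fine; the only content the paper adds is an explicit description of the resulting point set, which is not needed for the proof itself.
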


\paragraph{}

	When relaxing the density constraints, the best known construction of a Danzer set is due to Solomon and Weiss \cite{SWdfads} who prove the existence of a Danzer set in $\R^{d}$  with growth rate $ O\lc T^{d}\cdot \ln(T) \rc$ for every $d\ge 2$.
	The concept of an optical forest is a weakening of that  of a Danzer set in the sense that a Danzer set with growth rate $g:\R^{+} \to \R^{+}$ is  an optical forest with growth rate $g$.  The second result of this paper shows that  the growth rate bound obtained by Solomon and Weiss can be achieved by a deterministic construction if one considers optical forests instead of Danzer sets.

	To this end, the definition of \emph{range spaces} is introduced. 	
	A range space is a pair $\lc \mc{S}, \mc{R} \rc$ where  $\mc{S}$ is a set and  $\mc{R}$ is a family of subsets of $\mc{S}$. The members of $\mc{S}$ are called points and those of $\mc{R}$  ranges. In our context $\mc{S}$ will always be the $d$-dimensional box $ \mc{I}^{d}$, where $\mc{I} = \lb -{1\over 2}, {1\over 2} \rb$. The set $\mc{R}$ will be either the family of ranges $\mc{B}$ consisting of all boxes in $\mc{I}^{d}$ or the family $\mc{B'}$ of boxes in $\mc{I}^{d}$ with at least $d-1$ sides of equal length. Obviously, it holds that $\mc{B'} \sub \mc{B}$.
	Given $\e\;>0$, a subset $\mc{N}_{\e} \sub \mc{I}^{d}$  is an $\e$-net if $\mc{N}_{\e}$ intersects non-trivially any range $R \in \mc{R}$ as soon as $\mu_{d}\lc R\rc \ge \e$, where $\mu_{d}$ is the Lebesque measure in $\mc{I}^{d}$.
	
	The existence of $\e$-nets with growth rate $O\lc \e^{-1} \rc$ is known as the Danzer-Rogers problem and it is the combinatorial analogue to Danzer's problem. Indeed, let  $g:\R^{+} \to \R^{+}$ be a function of polynomial growth\footnote{A function $g: \R^{+} \to \R^{+}$ has polynomial growth rate if there exists a real number $\alpha \;>0$ such that $ g(x) \ll x^{\alpha}$.}
	 rate such that $x \to {g(x)/x}$ is non-decreasing. 
 	In \cite[Theorem 1.4]{SWdfads}, Solomon and Weiss prove that there exists a Danzer set $\mf{D} \sub \R^{d}$ with growth rate  $ O\lc g(T) \rc$ if and only if for every $\e\;>0$, there exists an $\e$-net $\mc{N}_{\e}$ in the range space $\lc \mc{I}^{d}, \mc{B} \rc$ which has a finite cardinality growing like $\# \mc{N}_{\e} = O\lc g\lc \e^{-{1\over d}}\rc \rc$. Note here that trivial changes in the proof of \cite[Theorem 1.4]{SWdfads} yield that the same claim is true if one replaces, on the one hand the ranges $\mc{B}$ by $\mc{B'}$ and, on the other,  Danzer sets by optical forests.
	Solomon and Weiss utilise a probabilistic argument due to Haussler and Welzl \cite{HausllerEpsilonNetsAndSimplexRangeQueries}  to show the existence of $\e$-nets in $\lc \mc{I}^{d}, \mc{B} \rc$ of growth rate $O\lc \e\cdot \ln\lc \e^{-1} \rc \rc$ and thus, equivalently, the existense of a Danzer set with growth rate $G(T) = O\lc T^{d}\cdot \ln(T) \rc$.
	
	In view of these results, it is asked in \cite[Problem 8]{AdiceamAroundTheDanzerProblem} if one can  construct deterministic $\e$-nets in $\lc \mc{I}^{d},\mc{B} \rc$ with growth rate $O\lc \e^{-1}\cdot \ln\lc\e^{-1} \rc \rc$. Our result yields an affirmative answer if one replaces the range space $\lc \mc{I}^{d}, \mc{B} \rc$ by $\lc \mc{I}^{d}, \mc{B'} \rc$.

\begin{theorem}\label{TheorWeakENets}
	Given $d\ge 2$ and the range space $\lc\mc{I}^{d}, \mc{B'} \rc$ defined above, for every $\e \;>0$, one can construct a deterministic $\e$-net $\mc{N}_{\e}$ with cardinality $\#\mc{N}_{\e} = O\lc \e^{-1}\cdot \ln\lc \e^{-1} \rc \rc$.
	 Equivalently,  one can construct a deterministic optical forest in $\R^{d}$  with growth rate $O\lc T^{d}\cdot \ln(T) \rc$.
\end{theorem}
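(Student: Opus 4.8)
The plan is to establish the $\e$-net assertion and derive the optical forest from it via the variant of \cite[Theorem~1.4]{SWdfads} recorded in the excerpt. Taking there $g(T) = T^{d}\ln T$ (so $g\lc\e^{-1/d}\rc = O\lc\e^{-1}\ln\lc\e^{-1}\rc\rc$), replacing $\mc{B}$ by $\mc{B}'$ and Danzer sets by optical forests, and noting that the gluing of rescaled nets at dyadic scales underlying that correspondence is itself an explicit procedure, a deterministic family $\lc\mc{N}_{\e}\rc_{\e\;>0}$ of $\e$-nets of the advertised cardinality produces a deterministic optical forest of growth rate $O\lc T^{d}\ln T\rc$, and conversely. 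So the task is: for each $\e\;>0$, construct a deterministic $\mc{N}_{\e}\sub\mc{I}^{d}$ with $\#\mc{N}_{\e} = O\lc\e^{-1}\ln\lc\e^{-1}\rc\rc$ meeting every $R\in\mc{B}'$ with $\mu_{d}\lc R\rc\ge\e$.

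The first step is to replace the continuum of ranges by a finite, explicitly described subfamily $\mc{R}_{\e}\sub\mc{B}'$, losing only a dimensional constant in the volume threshold. A box $R\in\mc{B}'$ has $d-1$ sides of a common length $s$ and one of length $\ell$; set $\lambda = \ell/s$. Dyadically rounding down, in turn, the aspect ratio $\lambda$, then $s$, then the position of $R$, yields a sub-box still lying in $\mc{B}'$ — the $d-1$ equal sides round to the same value — and of volume $\gg_{d}\e$; since $s,\ell\in(0,1]$ there are $O\lc\ln\lc\e^{-1}\rc\rc$ admissible dyadic aspect ratios. The delicate ingredient is the treatment of the \emph{orientation} of $R$: replacing the orthonormal frame of $R$ by a nearby member of a $\delta_{\lambda}$-net of frames, with the angular resolution $\delta_{\lambda}$ taken finer as $\lambda$ grows, leaves a concentric sub-box of volume $\gg_{d}\e$ which, in that frame, is axis-parallel. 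The upshot is that $\mc{R}_{\e}$ may be taken to be a finite list of ``dyadic boxes'', one list per frame in the union of the $\delta_{\lambda}$-nets, and it suffices to meet every member of $\mc{R}_{\e}$.

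The heart of the matter is to construct, deterministically and with only $O\lc\e^{-1}\ln\lc\e^{-1}\rc\rc$ points — far fewer than the polynomially many ranges in $\mc{R}_{\e}$, so allotting a point to each is hopeless — a set meeting every member of $\mc{R}_{\e}$; equivalently, a point set whose dispersion with respect to the rotated boxes of $\mc{B}'$ is optimal up to a logarithm. For a \emph{fixed} frame the relevant ranges are axis-parallel dyadic elementary intervals, and the first $O_{d}\lc\e^{-1}\rc$ points of a digital $(t,d)$-sequence (in base $2$, say) meet all of those of volume $\ge\e$. The real work is to make a \emph{single} point set serve all the frames in the $\delta_{\lambda}$-nets simultaneously — a fresh digital net per frame would be far too expensive — and the logarithmic overhead is exactly the price of this. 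Here one engineers a deterministic digital sequence whose strong dispersion is robust under the small rotations in play; this sequence replaces the Haussler--Welzl probabilistic $\e$-net used by Solomon and Weiss. (Alternatively, the optical forest can be built directly, by superposing over dyadic scales $e = 2^{-k}$ suitably localised rescaled copies of the dense forests of Theorem~\ref{TheorConvergenceConstructions}, each tuned to optimal visibility $\asymp e^{-(d-1)}$ at scale $e$ and confined to a ball of radius $O\lc e^{-(d-1)}\rc$; in any $B_{2}(0,T)$ only the $O(\ln T)$ copies with $e^{-(d-1)} = O(T)$ are then seen, each of finite density contributing $O\lc T^{d}\rc$ points, for growth rate $O\lc T^{d}\ln T\rc$.)

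I expect the main obstacle to be precisely this deterministic handling of \emph{all orientations} with so few points. A single grid fine enough to meet every rotated ``needle'' of cross-section $\asymp\e^{1/(d-1)}$ already needs $\asymp\e^{-d/(d-1)}$ points, so the construction must exploit both the length of such a needle — so that points widely separated along its axis all count towards meeting it — and the bounded combinatorial complexity of the family of rotated boxes of a given shape; it is here that the passage from $\mc{B}$ to $\mc{B}'$, leaving a single shape parameter in place of $d-1$, is essential, and why the argument succeeds for $\mc{B}'$ but not (yet) for $\mc{B}$. Once the dispersion input is in place, the remaining ingredients — the dyadic roundings, the $\delta_{\lambda}$-nets of frames, and the deduction of the optical forest — are routine, and yield $\#\mc{N}_{\e} = O\lc\e^{-1}\ln\lc\e^{-1}\rc\rc$ as claimed.
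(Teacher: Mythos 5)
Your main route is not a proof: the entire content of the theorem is concentrated in the sentence ``here one engineers a deterministic digital sequence whose strong dispersion is robust under the small rotations in play'', and you yourself flag this engineering as the unresolved obstacle. No such sequence is constructed or shown to exist, so the argument has a hole exactly where the theorem lives. Your parenthetical alternative is much closer to the actual proof, but as stated it fails: confining the scale-$e$ layer to a ball of radius $O\lc e^{-(d-1)}\rc$ about the origin destroys the covering property, since a box of cross-section $e$ and length $\asymp e^{-(d-1)}$ centred at distance $2e^{-(d-1)}$ from the origin is hit by no point of the scale-$e$ layer and by no coarser layer either. An optical forest must serve every such box everywhere in $\R^{d}$, so each scale's point set must be globally defined; the logarithmic saving has to come from elsewhere.

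The missing idea, which makes the whole of your machinery of dyadic roundings, $\delta_{\lambda}$-nets of frames and digital sequences unnecessary, is that a member of $\mc{B}'$ with $d-1$ sides of equal length $\asymp 2^{-j}$ and bounded volume is a thickened segment of length $\asymp 2^{(d-1)j}$, so hitting it is a \emph{single-scale} dense-forest problem at optimal visibility, solved by an anisotropic grid. Concretely, for each $j$ take the grid $S_{j}$ with spacing $2^{(d-1)j}$ in the first coordinate (omitting the slab $k=0$) and spacing $\asymp 2^{-j}$ in the remaining $d-1$ coordinates, and union over the $d$ coordinate permutations. The long axis of any such box, having length $\ge 2\sqrt{d}\cdot 2^{(d-1)j}$, must cross a hyperplane $x_{i}=k\cdot 2^{(d-1)j}$ with $k\ne 0$, and the nearest point of $R_{i}\lc S_{j}\rc$ to the crossing lies within $2^{-(j+1)}$ of it, hence inside the box --- for \emph{every} orientation, with no net of rotations required. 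Each layer has density $O\lc T^{d}\rc$ because its visibility is optimal at its own scale, and the exclusion $k\ne 0$ (not confinement to a ball) ensures that only the $O(\ln T)$ layers with $2^{(d-1)j}\le T$ meet $B_{2}(\bm{0},T)$, giving growth $O\lc T^{d}\ln T\rc$; rescaling the intersection of this set with a suitable cube into $\mc{I}^{d}$ yields the $\e$-nets. Your intuition that the passage from $\mc{B}$ to $\mc{B}'$ is what makes the problem tractable is right, but the reason is this reduction to thickened segments, not a reduction in the number of shape parameters fed to a digital net.
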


When $d=2$, it holds that $\mc{B} = \mc{B'}$. This implies the following corollary  which yields in turn a complete affirmative answer to the above-mentioned \cite[Problem 8]{AdiceamAroundTheDanzerProblem} in the case $d=2$.

\begin{corollary}\label{CorolENets}
	In the range space $\lc \mc{I}^{2}, \mc{B} \rc$, for every $\e\;>0$, there exists a deterministic construction of an $\e$-net $\mc{N}_{\e}$  with cardinality $\#\mc{N}_{\e} =  O\lc \e^{-1}\cdot \ln\lc \e^{-1} \rc \rc$.
\end{corollary}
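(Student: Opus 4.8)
The plan is to obtain Corollary \ref{CorolENets} as an immediate consequence of Theorem \ref{TheorWeakENets} in the case $d=2$, the point being that the two range families $\mc{B}$ and $\mc{B'}$ coincide in the plane.

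First I would record the set-theoretic identity $\mc{B} = \mc{B'}$ for $d=2$. The inclusion $\mc{B'} \sub \mc{B}$ holds by definition in every dimension. For the converse, note that a range $R \in \mc{B}$ is an axis-parallel box in $\mc{I}^{2}$ with two side lengths $\ell_{1}, \ell_{2} \ge 0$, and that membership in $\mc{B'}$ requires at least $d-1 = 1$ of these sides to share a common length --- a condition which is vacuously satisfied (take the common length to be $\ell_{1}$). Hence every box of $\mc{I}^{2}$ belongs to $\mc{B'}$, so $\mc{B} \sub \mc{B'}$ and therefore $\mc{B} = \mc{B'}$.

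Next I would invoke Theorem \ref{TheorWeakENets} with $d=2$, which produces for every $\e \;> 0$ a deterministic $\e$-net $\mc{N}_{\e}$ in the range space $\lc \mc{I}^{2}, \mc{B'} \rc$ with cardinality $\#\mc{N}_{\e} = O\lc \e^{-1} \cdot \ln\lc \e^{-1} \rc \rc$. Since the property of being an $\e$-net depends only on the underlying pair $\lc \mc{S}, \mc{R} \rc$, and since $\lc \mc{I}^{2}, \mc{B'} \rc = \lc \mc{I}^{2}, \mc{B} \rc$ by the previous step, the very same set $\mc{N}_{\e}$ is a deterministic $\e$-net in $\lc \mc{I}^{2}, \mc{B} \rc$ with the same cardinality bound. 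This is exactly the assertion of the corollary. As a side remark, combining this with the planar instance of the correspondence recalled just before Theorem \ref{TheorWeakENets} (in which the notions of dense forest, optical forest and Danzer set of finite density coincide) also yields a deterministic Danzer set in $\R^{2}$ of growth rate $O\lc T^{2} \cdot \ln(T) \rc$, matching the Solomon--Weiss bound.

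There is no genuine obstacle at the level of the corollary: its entire content beyond Theorem \ref{TheorWeakENets} is the elementary observation that $\mc{B} = \mc{B'}$ when $d=2$. The real difficulty --- constructing the deterministic digital sequence with the strong dispersion properties on which the $\e$-net of Theorem \ref{TheorWeakENets} rests --- has already been resolved by the time one reaches this point.
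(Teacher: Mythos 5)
Your proposal is correct and follows exactly the paper's own route: the corollary is stated in the text as an immediate consequence of Theorem \ref{TheorWeakENets} together with the observation that $\mc{B} = \mc{B'}$ when $d=2$, which you justify by noting that the ``at least $d-1$ sides of equal length'' condition is vacuous for $d=2$. Nothing further is needed.
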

	
\paragraph{}

The last result in this paper implies the construction of the best known deterministic planar \emph{Peres-type} dense forest; that is,  a dense forest obtained from a construction due to Peres \cite{AdiceamHFCYS,AdiceamQuasicrystals,Peresforest} described as follows:  given a sequence $\bm{a} = \lc a_n \rc_{n \in \N}$ in the unit torus $\T$, define the set 
	\begin{equation}\label{eqPeresForest}
	\mf{F}(\bm{a}) = \mf{F}_{1}(\bm{a}) \cup \mf{F}_{2}(\bm{a}) ,
	\end{equation}
	where 
	$$  \mathfrak{F}_{1}\lc \bm{a} \rc = \lfp \lc k, a_{|k|} + l \rc: k \in \Z\backslash\lfp 0\rfp, l \in \Z \rfp  \quad \text{and} \quad \mathfrak{F}_{2}\lc\bm{a} \rc = R_{\pi \over 2}\lc \mathfrak{F}_{1} \rc . $$
	Here, $R_{\pi \over 2}(\cdot)$ is the ${\pi \over 2}$-rotation around the origin $(0,0)$.  	
	Peres \cite{Peresforest} specialises construction \eqref{eqPeresForest} to the case   where
	$$ \quad a_{n} = \begin{cases}
	{n\over 2} \cdot  \phi, \quad& \text{if } n \in 2\N\\
	0 \quad & \text{if } n \in 2\N -1 
	\end{cases} $$ 
	 with $\phi = {1 +\sqrt{5}\over 2}$ the golden ratio.  
	 He then proves that the resulting dense forest $\mf{F}(\bm{a})$ has visibility $ O\lc \e^{-4} \rc$, providing this way the first example of a deterministic dense forest in the literature. A more careful analysis carried out in \cite{AdiceamQuasicrystals} shows that the same forest has visibility $O\lc \e^{-3} \rc$, yielding this way the best known deterministic Peres-type forest in the plane.
	
	In  \cite{AdiceamHFCYS,AdiceamQuasicrystals}, the authors refine construction \eqref{eqPeresForest} and generalise it to higher dimensions. In particular,  in \cite{AdiceamQuasicrystals}, the authors exploit these higher dimensional constructions to show the existence of dense forests in $\R^{d}$ with (almost optimal) visibility $O\lc \e^{-(d-1) - \h} \rc$ for every $d\ge 2$, where $\h \;>0$ can be chosen arbitrarily small. However, these constructions are almost-deterministic in the sense that they still depend on the probabilistic choice of a set of parameters. 
	 	
	The construction of dense forests of the form \eqref{eqPeresForest} is of independent interest, as the visibility properties of the point set $\mathfrak{F}\lc\bm{a}\rc$ depend on the dispersion properties of the sequence $\bm{a}$ in $\T$. This also holds for the corresponding constructions in higher dimensions.  
	
	Recall that, given a sequence $\bm{a} = \lc a_{n} \rc_{n \in \N}$ in $\T$, the dispersion is a measure of the density of the sequence $\bm{a}$. To be more precise,  the dispersion of the first $N$ terms of $\bm{a}$ is the quantity  
	$$ d_{\bm{a}}(N) \quad = \quad \sup_{x\in \T} \quad \min_{i \in \ldb N \rdb}  \quad \ldav x - a_{i} \rdav. $$
	Here, $\ldav x \rdav$ denotes the distance from $x \in \R$ to the nearest integer; that is $\ldav x \rdav = \min_{n\in \Z}|x-n |$. In particular, the distribution properties of $\bm{a}$  are related to the visibility bound of $\mathfrak{F}(\bm{a})$ and are captured by the following definition, which is a strengthening of the concept of dispersion (see for instance \cite{Chungwelldispersedsequences,LambertASequenceWellDispersedInTheUnitSquare,MatveevOnAMethodOfConstructingWellDispsersedSequences} for more details on this concept). Namely, 	given a sequence $\bm{a}$ in $\T$, the forest $\mf{F}(\bm{a})$ defined in \eqref{eqPeresForest} has visibility  bound $O(V)$ wherever $\bm{a}$ is $O(V)$- super uniformly  dispersed in the following sense - see also \cite[p. 18, Theorem 8]{AdiceamAroundTheDanzerProblem}.
	
	\begin{definition}[Super Uniformly Dispersion]\label{DefSuperUniformlyDispersedSequence}
		Let  $\bm{a}= \lc a_{n} \rc_{n\in \N}$ be a sequence in $\T$. The Super-Uniform Dispersion of the first $N$ terms of the sequence $\bm{a}$ is defined as
		\begin{equation}\label{eqDefinitionSUD}
		\Delta_{\bm{a}}(N)  \quad  = \quad  \sup_{m\in \N_{0}}\quad \sup_{\xi \in \T}\quad d_{\bm{a}}(N,m,\xi) 
		\end{equation} 
		where 
		\begin{equation}\label{eqDefinitionSUDUniformity}
		d_{\bm{a}}(N,m,\xi)  \quad  =  \quad  \sup_{x\in \T} \quad \min_{i \in \ldb N \rdb} \quad \ldav x - \lc a_{i+m} - i \xi \rc \rdav .
		\end{equation} 
		If $\Delta_{\bm{a}}(N) \underset{N\to+\oo}{\longrightarrow} 0$ then the sequence $\bm{a}$ is said to be Super Uniformly Dispersed. Moreover, the sequence $\bm{a}$ is $V$- Super Uniformly Dispersed, where $V:(0,1] \to \R^{+}$, if for every $\e \in (0,1]$, it holds that
		$ \Delta_{\bm{a}}\lc V(\e) \rc \le \e $, that is, that for any $m \in \N_{0}$ and $ x, \xi \in \T$, there exists $n \in \ldb V(\e) \rdb$ such that $\ldav x - (a_{(n+m)} - n\xi) \rdav \le \e$.
	\end{definition}

The quantities \eqref{eqDefinitionSUD} and \eqref{eqDefinitionSUDUniformity} in the definition of  super uniform dispersion impose uniformity both in the index parameter $m$ and in the parameter $\xi$ of the linear perturbation of the sequence. The definition of a $V$-super uniformly dispersed sequence is a quantitative refinement of the concept of (just) being super-uniformly dispersed.

	The following result is concerned with the effective construction of a digital $V$- super uniformly dispersed sequence with $V(\e) = O_{\h}\lc \e^{-2-\h} \rc$ for every $\h \;>0$. In view of \cite[p. 18, Theorem 8]{AdiceamAroundTheDanzerProblem}, one thus obtains the best known deterministic planar Peres-type forest.
	
	\begin{theorem}\label{TheorSUDSequence}
		One can define a deterministic $V$- super uniformly dispersed sequence in $\T$ with 
		$$ V(\e) = O\lc \e^{-2}\cdot 2^{O\lc \sqrt{-\ln(\e)} \rc } \rc .$$
		As a consequence, one can construct a deterministic dense forest of the form \eqref{eqPeresForest} with visibility $ O\lc \e^{-2}\cdot 2^{O\lc \sqrt{-\ln(\e)} \rc } \rc$ in the plane.
	\end{theorem}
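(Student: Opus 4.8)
\medskip
\noindent\textbf{Proof strategy.} The plan is to realise $\bm{a}$ as a \emph{digital sequence}: one fixes a prime base $b=b(\e)$ and defines $a_{n}$ by an explicit finitary rule on the base-$b$ digits of $n$, so that the sequence is deterministic, and then reads off the super-uniform dispersion from rank properties of the associated generating data modulo $b$.

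The first step is a reduction to \emph{windows} and to \emph{$b$-adic shears}. Fix $\e\in(0,1]$, put $N=\lf V(\e)\rf$ and let $m\in\N_{0}$, $x,\xi\in\T$ be arbitrary. Writing $j=i+m$ for $i\in\ldb N\rdb$ one has $a_{i+m}-i\xi=\lc a_{j}-j\xi\rc+m\xi$, so translating the target point $x$ by the constant $m\xi$ turns the bound $\Delta_{\bm{a}}(N)\le\e$ into the statement: \emph{for every window $W$ of $N$ consecutive integers and every $\xi\in\T$, the set $\lfp a_{j}-j\xi:j\in W\rfp$ is $\e$-dense in $\T$}. Writing $j=M'+n$ with $0\le n<N$ and absorbing the constant $M'\xi$ as well, it suffices that $\lfp a_{M'+n}-n\xi:0\le n<N\rfp$ be $\e$-dense for every offset $M'\in\N$. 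Since here $n<N$, one may replace $\xi$ by a $b$-adic rational $\xi'=q\,b^{-k'}$ with $\lav\xi-\xi'\rav\le\e/(2N)$, i.e.\ with $k'=O\lc\log_{b}(N/\e)\rc$ digits, at a cost of $\e/2$; so the whole claim reduces to $(\e/2)$-density of $\lfp a_{M'+n}-n\,q\,b^{-k'}:0\le n<N\rfp$, uniformly over the offset $M'\in\N$ and over the integer multiplier $q\in\ldb 0,b^{k'}-1\rdb$.

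For the construction I would take $N=b^{k}$ and build $\bm{a}$ by interleaving base-$b$ van der Corput-type digital sequences across $O(k)$ successive scales, the $s$-th scale being arranged to stay well distributed under shears $n\,q\,b^{-k'}$ whose size sits at the $s$-th $b$-adic level; this is what lets a single sequence cope with \emph{every} multiplier $q$ at once. Concretely each term has the form $a_{n}=\sum_{r\ge1}\lc\sum_{i\ge0}c_{r,i}\,n_{i}\rc b^{-r}$ for fixed coefficients $c_{r,i}$ modulo $b$ (each inner sum read modulo $b$), where $n=\sum_{i\ge0}n_{i}b^{i}$. Over a window $0\le n<b^{k}$ the integer $M'+n$ splits into at most two sub-blocks, on the larger of which its low $k$ digits sweep an interval of at least $b^{k}/2$ consecutive residues modulo $b^{k}$ while its higher digits are frozen; on that sub-block one checks, up to borrow terms, that the leading $\ell$ base-$b$ digits of $a_{M'+n}-n\,q\,b^{-k'}$ depend $\Z/b\Z$-linearly on the free digits of $n$, and that --- by the choice of the $c_{r,i}$ and of the interleaving --- this linear map is onto $\ldb 0,b-1\rdb^{\ell}$ as soon as $k\ge\ell+t(b)$, where $t(b)$ is a defect produced by the shear. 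Taking $\ell$ with $b^{-\ell}$ of order $\e$ then yields the required $(\e/2)$-density.

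It remains to optimise and to transfer to the forest. Since $\ell=O\lc\log_{b}(1/\e)\rc$ and $k=\ell+t(b)$, the window length $N=b^{k}$ is of the form $\e^{-2}$ times a power of $b$, and choosing the prime base $b=b(\e)$ of size $2^{O\lc\sqrt{-\ln\e}\rc}$ balances the two contributions, giving $V(\e)=O\lc\e^{-2}\cdot2^{O\lc\sqrt{-\ln\e}\rc}\rc$ and so the first assertion. For the second, the Peres-type forest $\mf{F}(\bm{a})$ of \eqref{eqPeresForest} attached to a $V$-super uniformly dispersed sequence has visibility $O(V)$ by \cite[p.~18, Theorem~8]{AdiceamAroundTheDanzerProblem}, whence the same bound for the planar forest built from the sequence above. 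The hard part will be the surjectivity in the rank estimate, uniformly in $q$ and $M'$: the perturbation $-n\,q\,b^{-k'}$ is an arithmetic progression, not a permutation of digits, so multiplying an index by $q$ couples all digit positions and creates carries that a digital sequence does not control for free --- indeed a single base-$b$ van der Corput sequence is \emph{not} robust against shears near $b^{-1}$, which is exactly why the multi-scale interleaving is introduced. Verifying that the interleaved generating data keeps full rank on its leading rows after an arbitrary shear, while keeping the number of scales --- hence $t(b)$ --- small enough to reach $\e^{-2}\cdot2^{O(\sqrt{-\ln\e})}$, is the delicate point; the two-sub-block splitting and the borrow terms are routine by comparison.
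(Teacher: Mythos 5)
Your reduction to windows and to $b$-adic rational shears is sound and matches the paper's own first step (the paper approximates $\xi'$ by $\xi=l/2^{u}+l'/2^{2u}$ with error $2^{-(2u+1)}$ and absorbs the offset exactly as you do). But the heart of the theorem --- exhibiting an explicit finite sequence whose shifted copies $\lc a_{n}-n\xi\rc_{n}$ are $\e$-dense \emph{for every} $\xi$ --- is precisely the step you defer as ``the delicate point,'' and the mechanism you propose for it is doubtful. You want $a_{n}$ to be an $\F_{b}$-linear form in the digits of $n$ and you hope that the leading digits of $a_{M'+n}-nqb^{-k'}$ remain an onto linear map of the free digits ``up to borrow terms''; but the map $n\mapsto nq\bmod b^{k'}$ is genuinely nonlinear over $\F_{b}$ because of carries, and no rank condition on fixed generating matrices is stated (or known) that survives all $q$ simultaneously. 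As written, the proposal reduces the theorem to an unproved combinatorial claim, so there is a genuine gap.

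The paper closes this gap with a construction that is \emph{bilinear}, not linear, in the digit blocks: writing $k=r\cdot 2^{i^{2}}+s$ it sets $c_{k}=rs/2^{i^{2}}$ (with a correction $+s/2^{i^{2}}$ on the upper half of the range of $r$). The point is that within the block of indices with a fixed $r$, the sequence $c_{k}-k\xi$ is an arithmetic progression in $s$ with common difference $(r-l+O(2^{-u}))/2^{u}$ where $\xi\approx l/2^{u}+l'/2^{2u}$; one then \emph{chooses the block} $r=l'-1$ (or $r=2^{u}+l'-1$, which toggles the parity via the correction term) so that this difference is an odd multiple of $2^{-u}$ and hence equidistributes exactly. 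So uniformity in $\xi$ is bought by spending a factor $2\cdot2^{u}$ on the number of blocks --- this is where the exponent $\e^{-2}$ really comes from, not from a rank defect $t(b)$ --- and the remaining $2^{O(\sqrt{-\ln\e})}$ loss comes from gluing the scales $i$ together (the paper's Lemma~\ref{LemSUDSequence}, with $V_{i}/V_{i-1}=2^{4i-2}$ and $i\asymp\sqrt{-\log_{2}\e}$). Relatedly, you cannot literally take the base $b=b(\e)$: a super uniformly dispersed sequence is a single sequence that must work for all $\e$ at once, so the $\e$-dependent parameters must be interleaved across scales by an explicit lemma of this kind; your multi-scale interleaving is invoked only within a fixed $\e$ to handle the size of $q$, and the cross-$\e$ gluing is not addressed.
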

	
	A sequence satisfying the conclusion of this statement, denote it by $\bm{u} = \lc u_{n} \rc_{n \ge 1} $, can be defined as follows: decompose the integer $n\ge 1$ as  
	$$n = k\cdot 2^{i} + 2^{i-1} -2  \quad \text{with } i \ge 1 \text{ and } k\ge 0,$$
	 and the integer $k$ as 
	 $$ k \equiv r \cdot 2^{i^{2} } + s \pmod {2\cdot 2^{2 i^{2}}} \quad \text{with } 0 \le r \le 2 \cdot 2^{i^{2} }-1 \text{ and } 1\le s \le 2^{i^{2}}.$$
	  Then, $\bm{u}$ is given for all $n\ge 1$ by
	\begin{equation}\label{eqBinarySequence1}
	u_{n} =  \begin{cases}
	{ rs \over 2^{ i^{2}}} \quad& \text{if }  0 \le r \le 2^{i^2} -1 ,\\
	{ rs \over 2^{ i^{2}}} + {s \over 2^{i^2}} \quad& \text{if }  2^{i^2} \le r \le 2\cdot 2^{i^2} -1   .
	\end{cases}
	\end{equation}

	\vspace{1cm}

	\paragraph{}
	The paper is organised as follows. In Section \ref{SectionConstructions} the proofs of  Theorems \ref{TheorConvergenceConstructions}, \ref{TheorWeakENets} and of Corollary \ref{CorConvergenceConstructions} are given.  Theorem \ref{TheorSUDSequence} is proved in Section \ref{SectionSuperUniformDispersedSequences}. 
	
	\paragraph{Acknowledgments.} The author would like to thank Faustin Adiceam for his time spent reviewing this paper, as well as for his useful comments and suggestions towards the final presentation.

	\section{Proof of Theorems \ref{TheorConvergenceConstructions} and \ref{TheorWeakENets}}\label{SectionConstructions}

	\begin{proof}[Theorem \ref{TheorConvergenceConstructions}]
		Fix a natural number $d\ge 2$ and a sequence $\lc e_{j} \rc_{j\ge1}$ satisfying the assumptions of Theorem \ref{TheorConvergenceConstructions}.	For every $j \in \N$, define the sets 
		$$ S_{j} = \lfp \lc k\cdot V\lc e_{j}\rc \quad,\quad l_{2}\cdot {e_{j} \over \sqrt{d}} \quad , \quad ... \quad , \quad  l_{d} \cdot {e_{j} \over \sqrt{d}} \rc:  k\in \Z\backslash \lfp \bm{0} \rfp, \quad l_{2},...,l_{d} \in \Z \rfp.$$
		For every $ i \in \lfp 1,...,d\rfp$, let $R_{i}: \R^{d} \mapsto \R^{d}$ be the map which permutes the first and the $i$-th coordinate of a point; that is, 
		\begin{equation}\label{eqTheorCCAxisRotations}
		R_{i}\lc x_{1},...,x_{i-1},x_{i},x_{i+1},...,x_{d} \rc = \lc x_{i},..., x_{i-1},x_{1},x_{i},...,x_{d} \rc .
		\end{equation}
		
		Define also the sets 
		\begin{equation}\label{eqTheorCCForests}
		 \mathfrak{F}_{j} = \bigcup_{i \in \lfp1,...,d\rfp} R_{i}\lc S_{j} \rc \quad \text{and} \quad \mathfrak{F}= \bigcup_{j\in \N} \mathfrak{F}_{j} .
		 \end{equation}
		 It is easy to check that  every line segment $L$ of length $2\sqrt{d}\cdot V\lc e_{j} \rc $ is such that the distance $ dist\lc L, \mathfrak{F}_{j} \rc$ from $L$ to the set $\mathfrak{F}_{j}$ is smaller than $e_{j}$. Indeed, fix $j\in \N$. If the line segment $L$ has length $2\sqrt{d}\cdot V\lc e_{j} \rc$ for some $j\in \N$, then $L$ contains at least one point which has at least one coordinate equal to $k \cdot V\lc e_{j} \rc$, for some $k \in \Z$. Thus,  from the definition of the set $\mf{F}_{j}$, one obtains that $\dist\lc L, \mf{F}_{j} \rc \le e_{j} $.	This implies the claim regarding the visiblity function of the forest $\mathfrak{F}$ in Theorem  \ref{TheorConvergenceConstructions}.
		
		As for the density of the forest $\mf{F}$, it is enough to show that 
		\begin{equation}\label{eqTheorConvergenceConstructions1}
		 \limsup_{T\ge 1} \lc { \#\lc \mf{F}\cap B_{2}(\bm{0},T)\rc \over T^{d} }\rc  \quad  \;< \quad   +\oo .
		\end{equation}
Indeed, given $j\in \N$ and $T\;>0$, an elementary estimate yields that
\begin{equation}\label{eqTheorConvergenceConstructions2}
{ \# \lc \mathfrak{F}_{j} \cap B_{2}\lc \bm{0}, T \rc \rc \over T^{d}}  \quad \le  \quad 2^{d} d^{d\over 2}\cdot    {e_{j}^{-(d-1)}\cdot  V\lc e_{j} \rc^{d-1} \over V\lc e_{j} \rc^{d}} \quad  = \quad 2^{d}d^{d\over 2} \cdot  {e_{j}^{-(d-1)} \over V\lc e_{j} \rc}  .
\end{equation}		
Set $i_{T} = i(T)$ to be the unique index such that $ V\lc e_{i_{T}} \rc \le T \;< V\lc e_{i_{T}+1} \rc$.  Notice that  $\# \lc \mf{F}_{j}\cap B_{2}(\bm{0},T)\rc = 0 $ for every $j \;> i_{T}$.
Therefore, one has that 
\begin{equation}\label{eqTheorConvergenceConstructions3}
 {\#\lc \mf{F} \cap B_{2}\lc \bm{0},T \rc \rc \over T^{d}}  \quad  =  \quad  { \sum_{j=1}^{i(T)}\# \lc \mf{F}_{j} \cap B_{2}\lc \bm{0},T \rc \rc \over T^{d}}  \quad  \underset{\eqref{eqTheorConvergenceConstructions2}}{\le}  \quad  2^{d}d^{d \over 2} \cdot  \sum_{j\ge1}^{+\oo} {e_{j}^{-(d-1)} \over V\lc e_{j} \rc }  \cdot   
 \end{equation}
The right-hand side of inequality \eqref{eqTheorConvergenceConstructions3} converges by assumption.
The choice of $T \;>0$ was arbitrary, therefore,  inequality \eqref{eqTheorConvergenceConstructions1} is proved. That is, the forest $\mf{F}$ has finite density. The proof is complete. $\blacksquare$
		
	\end{proof}

	\begin{proof}[Corollary \ref{CorConvergenceConstructions}]
		Fix $\h > 0$. Applying Theorem \ref{TheorConvergenceConstructions} with $e_{j} = {1\over 2^{j}}$ and  $V\lc \e \rc =  \e^{-(d-1)} \cdot \ln\lc \e^{-1} \rc \cdot \ln\ln \lc \e^{-1} \rc^{1 +\h}$ yields the result. More explicitly, the deterministic construction of the corresponding dense forest is as follows: given $x\in \R$, define $\left\lceil x\right\rceil_{o}$ as the smallest odd number larger than $x$. Set
		$$ S_{j} = \lfp  \lc k\cdot \left\lceil j\cdot \ln(j)^{1+\h} \right\rceil_{o} \cdot 2^{j\cdot(d-1)}, \quad {l_{2}\over 2^{j}},\quad ..., \quad {l_{d} \over 2^{j}} \rc: k\in 2\Z +1, l_{2},...,l_{d} \in \Z \rfp $$
		and $\mathfrak{F} = \underset{j\in \N}{\bigcup}\bigcup_{i=1}^{d} R_{i}\lc S_{j} \rc $ with $R_{i}$ defined  in \eqref{eqTheorCCAxisRotations}. Given $\e\;>0$, let $i = i(\e)$  be the unique index such that $e_{i}\le \e \;< \e_{i-1}$. Then, it holds that
		$$ W\lc \e \rc =  O\lc V\lc e_{i} \rc \rc = {V\lc e_{i}\rc \over V\lc e_{i-1}\rc } \cdot  O\lc V(\e) \rc = O\lc V(\e) \rc ,$$
		since, given $i \in \N$, one has that ${ V\lc e_{i} \rc \over V\lc e_{i-1} \rc} \le 2^{d+1}$. 
		The proof of the corollary is complete. $\blacksquare$
	\end{proof}

	\begin{proof}[Theorem \ref{TheorWeakENets}]
		Fix a natural number $d\ge 2$.	It is enough to prove the existence of an $\e$-net $\mc{N}_{\e}$ with $\#\mc{N}_{\e} = \e^{-1}\cdot \ln\lc \e^{-1} \rc$ in $\lc \mc{I}^{d}, \mc{B}' \rc$ for every $\e \in \lfp  2^{-(d-1)dn}: n \in \N \rfp$.  For every $j\in \N$,  define the sets 
		$$  S_{j} = \lfp k \cdot 2^{(d-1)j}, {l_{2} \over \sqrt{d-1}\cdot  2^{j+1}},..., {l_{d} \over \sqrt{d-1}\cdot 2^{j+1}}: k \in \Z\backslash\lfp \bm{0} \rfp, l_{2},...,l_{d} \in \Z \rfp .$$
		Given $i \in \lfp 1,...,d\rfp$, let $R_{i}: \R^{d} \to \R^{d}$ be the rotation defined in equation \eqref{eqTheorCCAxisRotations}. Set also \linebreak
		$ \mf{S}_{j} = \bigcup_{i =1}^{d} R_{i}\lc {S}_{j} \rc \quad \text{and}\quad \mf{S} = \bigcup_{j\in \N} \mf{S}_{j} .$
		
		The goal is to prove that every box in $\R^{d}$ with at least $d-1$ sides of equal length and with volume $2^{d}\cdot\sqrt{d}$ intersects   $\mf{S}$. To this end, fix such a box $B'$ in $\R^{d}$. Then, there exists $\e \;>0$ such that the sides of $B'$ have lengths $2^{d}\cdot \sqrt{d} \cdot \e^{-1},\e,..., \e$. Define $j = j(\e)$ to be the smallest natural number such that $1/2^{j(d-1)} \le \e$. Then, the box $B'$ contains a box $B$ with sides of lengths $ 2\cdot \sqrt{d} \cdot 2^{(d-1)j},{1\over 2^{j}},...,{1\over 2^{j}}$. Define $L$ to be the line segment connecting the middle points of the two faces of $B$ which have sides of length $1/2^{j}$. Obviously, $L$ has length $2\cdot \sqrt{d}\cdot 2^{j(d-1)}$. Therefore, $L$ contains at least one point $\bm{x} = \bm{x}(L)$ which has at least one coordinate equal to $ k \cdot 2^{(d-1)j}$ for some $k = k(L) \in \Z$. By construction of the set $S_{j}$, there exists at least one point $ \bm{y} \in S_{j}$ such that $\ldav \bm{x} - \bm{y} \rdav_{2} \le {1\over 2^{j+1}}$. Therefore, $\bm{y} \in B \sub B'$. Thus, the claim is proved.

		Moreover, an elementary estimation yields that $\# \lc \mf{S}\cap B_{2}\lc \bm{0}, T \rc \rc = O\lc T^{d}\cdot \ln(T) \rc$. Indeed, fix $n\in \N$ and set the quantities $\e_{n} =  2^{-(d-1)dn}$ and  $Q_{n} = \mf{S} \cap \lb 0, \tau_{d}(n) \rb^{d} $, where $\tau_{d}(n) =2\cdot d^{1 / 2d} \cdot 2^{(d-1)n}$.  Furthermore,  set
		$$ \mc{N}_{\e_{n}} = \lfp \lc {x_1 \over \tau_{d}(n) } -{1\over 2},...,{x_d \over \tau_{d}(n) } -{1\over 2} \rc: \lc x_{1},...,x_{d} \rc \in Q_{n} \rfp  \sub I^{d} .$$ 
		From the construction of the set $Q_{n}$, it follows easily that the set $\mc{N}_{\e_{n}}$ intersects every box  $\mc{B'}$ of volume larger than $\e_{n}$ and, moreover, that $\# \mc{N}_{\e_{n}} \ll 2^{(d-1)dn}\cdot \ln\lc 2^{(d-1)dn} \rc$.
		The proof is complete. $\blacksquare$
		
	\end{proof}

	\section{Construction of Super Uniformly Dispersed Sequences}\label{SectionSuperUniformDispersedSequences}
	\paragraph{}
	The goal in this section is to prove Theorem \ref{TheorSUDSequence}. To this end,  one needs the following lemma which serves as a tool for the construction of super-uniformly dispersed sequences. Throughtout this section,  a  number $n \in \N$ will be decomposed as 
	\begin{equation}\label{eqSUDDecompose}
	n = k\cdot 2^{i} + 2^{i-1}-2 \quad \quad \text{with } \quad i \ge 1 \quad \text{and} \quad k \ge 0 .
	\end{equation}
	The existence and uniqueness of decomposition \eqref{eqSUDDecompose} is guaranteed by the dyadic decomposition of $n+2$.

	\begin{lemma}\label{LemSUDSequence}
		Let $V:(0,1]\to \R^{+}$ be a decreasing function such that $V\lc \zeta \rc \ge {1\over \zeta}$ for every $\zeta \in (0,1]$. Let $ \bm{c}^{(i)} =\lc c_{k}^{(i)} \rc_{k\in \N} \in \T^{\N}$, $i \in \N,$ be a family of sequences in $\T$ such that upon setting  $V_{i} = V\lc{1\over 2^{i^{2}}} \rc \in \R^{+}$, it holds that
		\begin{equation}\label{eqLemSUDSAssumption}
		 \Delta_{\bm{c}^{(i)}}\lc V_{i} \rc \le {1\over 2^{i^{2}}} \quad \text{for all } i \ge 1 
		 \end{equation}
	(here, the quantities $\Delta_{c^{(i)}}\lc V_i \rc $ are defined in Definition \ref{DefSuperUniformlyDispersedSequence}).
		Then, the sequence $\lc c_{k}^{(i)}\rc_{n\in\N}$, where the integers $n,k,i$ are related by equality \eqref{eqSUDDecompose},
		 is  $W$- super uniformly dispersed. Here,
		$$ W(\e) = 2^{i+2} \cdot {V_{i} \over V_{i-1}} \cdot V(\e) $$
		with $i = i(\e)$ the unique index such that ${2^{-i^{2}}} \le \e \;< { 2^{-(i-1)^{2}}} $.
	\end{lemma}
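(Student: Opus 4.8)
The plan is to transfer the super-uniform dispersion of the interleaved sequence $\bm{a}=\lc a_{n}\rc_{n\ge1}$, where $a_{n}=c^{(i)}_{k}$ whenever $n=k\cdot2^{i}+2^{i-1}-2$ as in \eqref{eqSUDDecompose}, from the single hypothesis \eqref{eqLemSUDSAssumption} on the relevant family member $\bm{c}^{(i)}$. Fix $\e\in(0,1]$, let $i=i(\e)$ be the index with $2^{-i^{2}}\le\e\;<2^{-(i-1)^{2}}$, and fix arbitrary $m\in\N_{0}$, $x,\xi\in\T$. By Definition \ref{DefSuperUniformlyDispersedSequence} it is enough to produce an integer $n\in\ldb W(\e)\rdb$ with $\ldav x-(a_{n+m}-n\xi)\rdav\le\e$.

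\emph{Locating a progression of good indices.} An integer $N\ge1$ satisfies $a_{N}=c^{(i)}_{k}$ for some $k\ge1$ precisely when $N+2$ has $2$-adic valuation exactly $i-1$, i.e.\ when $N=k\cdot2^{i}+2^{i-1}-2$; this follows from the uniqueness of the decomposition \eqref{eqSUDDecompose}. I would therefore set $k_{0}=\lceil(m+3)/2^{i}\rceil\in\N$ and let $k$ run through $k_{0}+1,\dots,k_{0}+\lf V_{i}\rf$, associating to it the integer $n=k\cdot2^{i}+2^{i-1}-2-m$, so that $n+m$ is of the required form with $k\ge1$ and hence $a_{n+m}=c^{(i)}_{k}$. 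From $k\ge k_{0}+1\ge(m+3)/2^{i}+1$ one gets $n\ge1$; from $k\le k_{0}+V_{i}<(m+3)/2^{i}+1+V_{i}$ together with $V_{i}=V\lc2^{-i^{2}}\rc\ge2^{i^{2}}\ge2$ and $i\ge1$, a short calculation gives $n\le 2^{i}(2+V_{i})\le 2^{i+1}V_{i}$.

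\emph{Matching with the hypothesis.} For such $k$ and $n$, write $k=k'+k_{0}$ with $k'\in\ldb V_{i}\rdb$ and put $\xi'=2^{i}\xi\in\T$. Expanding $n\xi=k'\xi'+k_{0}\xi'+(2^{i-1}-2-m)\xi$ yields
$$ x-(a_{n+m}-n\xi)\;=\;\bigl(x+(2^{i-1}-2-m)\xi+k_{0}\xi'\bigr)\;-\;\bigl(c^{(i)}_{k'+k_{0}}-k'\xi'\bigr). $$
I would then invoke assumption \eqref{eqLemSUDSAssumption}, that is $\Delta_{\bm{c}^{(i)}}\lc V_{i}\rc\le2^{-i^{2}}$, with index shift $m'=k_{0}\in\N_{0}$, linear perturbation $\xi'\in\T$, and target $z:=x+(2^{i-1}-2-m)\xi+k_{0}\xi'\in\T$: this furnishes a $k'\in\ldb V_{i}\rdb$ with $\ldav z-(c^{(i)}_{k'+k_{0}}-k'\xi')\rdav\le2^{-i^{2}}\le\e$, hence $\ldav x-(a_{n+m}-n\xi)\rdav\le\e$ for the corresponding $k=k'+k_{0}$ and $n=k\cdot2^{i}+2^{i-1}-2-m$. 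The reason this closes is that super-uniform dispersion is \emph{by design} uniform in the shift $m'$ and in the perturbation $\xi'$, which is exactly what permits absorbing both the $2^{i}$-dilation of $\xi$ and the $m$-dependent offset $k_{0}\approx m/2^{i}$.

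\emph{Checking the window and concluding.} It remains to verify $n\in\ldb W(\e)\rdb$. By the previous steps $n\ge1$ and $n\le2^{i+1}V_{i}$; since $\e\;<2^{-(i-1)^{2}}$ and $V$ is decreasing, $V(\e)\ge V\lc2^{-(i-1)^{2}}\rc=V_{i-1}$, so
$$ n\;\le\;2^{i+1}V_{i}\;\le\;2^{i+1}V_{i}\cdot\frac{V(\e)}{V_{i-1}}\;=\;\tfrac12\,W(\e)\;<\;W(\e), $$
whence $n\le\lf W(\e)\rf$. As $\e$, $m$, $x$, $\xi$ were arbitrary this gives $\Delta_{\bm{a}}\lc W(\e)\rc\le\e$ for every $\e\in(0,1]$, i.e.\ $\bm{a}$ is $W$-super uniformly dispersed. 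The only genuinely delicate point I expect is the first step: one must make sure that, inside the block of indices $\{m+1,\dots,m+\lf W(\e)\rf\}$, the subsequence sitting at the fixed level $i$ does deliver at least $\lf V_{i}\rf$ consecutive values of the parameter $k$ and that the attached $n$'s all lie in $[1,2^{i+1}V_{i}]$, uniformly in $m$ — everything after that is a formal consequence of the uniformities already built into Definition \ref{DefSuperUniformlyDispersedSequence}.
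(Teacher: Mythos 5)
Your argument is correct and follows essentially the same route as the paper's proof: locate the first block of level-$i$ indices beyond $m$ via the decomposition \eqref{eqSUDDecompose}, apply hypothesis \eqref{eqLemSUDSAssumption} to $\bm{c}^{(i)}$ with the dilated slope $2^{i}\xi$, a shifted starting index and an adjusted target, and then bound the resulting index $n$ by $O\lc 2^{i}V_{i}\rc \le W(\e)$ using the monotonicity of $V$. The only differences are cosmetic (your explicit $k_{0}=\lceil(m+3)/2^{i}\rceil$ versus the paper's minimal $k$ with $k\cdot2^{i}+2^{i-1}-2\ge m$, and a slightly sharper constant $2^{i+1}V_{i}$ in place of $2^{i+2}V_{i}$).
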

	
	\begin{proof}
		
		Set $ b_{n} = c_{k}^{(i)}$ for every $n \in\N$  and $ \e_{i} = {1 \over 2^{i^{2}}}$ for every $i \in \N_{0}$, i.e. $i = \left\lceil \sqrt{ -\log\e / \log2 }  \right\rceil $. 
		The goal is to show that the sequence $\bm{b} = \lc b_{n} \rc_{n\in \N}$  is $W$- super uniform dispersed. Fix $\e\;> 0$, $ \xi ,\gamma \in \T$ and $ m\in \N_{0}$. There exists a unique $i = i(\e) \in \N$ such that $ \e_{i} \le \e \;< \e_{i-1}$ and a minimal natural number $ k\in \N$ such that
		$ k\cdot 2^{i} + 2^{i-1} -2 \ge m$. By assumption \eqref{eqLemSUDSAssumption}, there exists $j\in \ldb 1, V_{i} \rdb$ such that
		
		$$ \ldav c_{k+j}^{(i)} - \xi \cdot j2^{i} - \lc \xi\cdot k2^{i} + \xi2^{i-1} - 2\xi - m\xi + \gamma \rc \rdav \le \e_{i} .$$
		By setting $ j' = j\cdot 2^{i} + 2^{i-1} -2 + k\cdot 2^{i} - m $, one has that
		$$ \ldav b_{m+ j'} -  \xi\cdot j' - \gamma \rdav \le \e_{i}  .$$
		It holds that
		$$ 1 \le j' = j\cdot 2^{i} + 2^{i-1} -2 + k\cdot 2^{i} - m  \le 2^{i}\cdot V_{i} + 2^{i-1} + 2^{i} \le 2^{i} \cdot V_{i} + {3\over 2} \cdot 2^{i} \le 4 \cdot 2^{i} \cdot V_{i} ,$$
		since $k\cdot 2^{i} - m \le 2^{i}$ and $2^{i^2} \le V_{i}$. Thus, 
		$ j' \in \ldb 2^{i}\cdot V_{i} + 3 \cdot 2^{i-1} \rdb \sub \ldb 2^{i+2} \cdot V_{i} \rdb .$
		Considering the monotonicity of the function $V$, it follows that
		$$ W\lc \e \rc \le 2^{i+2} \cdot V_{i} \le 2^{i+2}\cdot {V_{i} \over V_{i-1}} \cdot V(\e) .$$
		The proof is complete. $\blacksquare$
		
	\end{proof}

	\begin{proof}[Theorem \ref{TheorSUDSequence}]

		The proof is obtained by applying Lemma \ref{LemSUDSequence} in the following way. Instead of finding a family of sequences $ \lfp \bm{c'}^{(i)}\rfp_{i \in \N}$ satisfying $\Delta_{\bm{c'}^{(i)}}\lc V'_{i} \rc \le {1 \over 2^{i^2}} $ for some properly chosen naturals $V'_{i}$, the goal will be to find a family of finite sequences  $ \lfp \bm{c}^{(i)} \rfp_{i\in \N} =  \lfp \lc c_{k}^{(i)} \rc_{k \in \ldb V_{i} \rdb} \rfp_{i \in \N}$ satisfying  $ \sup_{\xi \in \T} d_{\bm{c}^{(i)}}\lc V_{i}, 0 , \xi \rc \le {1\over 2^{i^2}}$, for some properly chosen naturals $V_{i}$. This is sufficient in order to apply Lemma \ref{LemSUDSequence} as, for instance, given a finite sequence $ \bm{c}^{(i)} = \lc c_{k}^{(i)} \rc_{k \in \ldb V_{i} \rdb}$ such that $ \sup_{\xi \in \T} d_{\bm{c}^{(i)}}\lc V_{i}, 0 , \xi \rc \le {1\over 2^{i^2}}$,  by concatenating the terms of $\bm{c}^{(i)}$, one can construct a sequence $\bm{\bar c}^{(i)}$ such that $ \Delta_{\bm{\bar{c}}^{(i)}}\lc 2V_i \rc \le {1\over 2^{i^2}}$. Recall that given two finite sequences $\bm{\alpha} =\lfp \alpha_{i} \rfp_{i=1}^{a}$ and $\bm{\beta} =\lfp \beta_{j} \rfp_{j=1}^{b}$ the concatenation of $\bm{\alpha}$ with $\bm{\beta}$  is the finite sequence $\bm{\gamma} = \lfp \gamma_{k} \rfp_{k=1}^{a+b}$ where, for every $k \in \ldb 1, a+b \rdb$,
		$$ \gamma_{k} =  \begin{cases}
		\alpha_{k} &  \text{if } k \in \ldb 1, a \rdb \\
		\beta_{k - a} &  \text{if } k \in \ldb a+1, a+b \rdb 
		\end{cases}  .  $$

		Indeed,  assume that $\bm{c} = \lc c_{k} \rc_{k \in \ldb V\rdb}$ is a finite sequence such that $  \sup_{\xi \in \T}  d_{\bm{c}} \lc V,0 , \xi \rc  \le  \e$
		 for some $\e \;>0$ and $V \in \R$. 		
		Decompose a natural number $ n\in \N$ as $ n = u\cdot \lf V\rf +k$ with $u\in \N_{0}$ and $ k \in \ldb V \rdb$ and define the sequence $ a_{n} = c_{k}$ for every $n\in \N$. Then, setting $\bm{a} = \lc a_{n} \rc_{n \in \N}$ easily yields that $\Delta_{\bm{a}}(2V) \le \e$.

		Fix now $i\in \N$ and decompose every $k \in \ldb 1, 2\cdot 2^{2 i^2}\rdb$ as $k = r\cdot 2^{i^{2}} +s$ with $0 \le r \le 2\cdot 2^{i^{2}} -1 $ and $1 \le s \le 2^{i^2}$. Also, set $V(\e) = 2\cdot \e^{-2}$ and $V_{i} = V\lc {2^{-i^2}} \rc = 2\cdot 2^{2\cdot i^{2}}$.
		
		In view of Lemma \ref{LemSUDSequence} and of the remark above, it is enough to prove that the finite sequence $ \bm{c}^{(i)} = \lc c_{k} \rc_{k\in \ldb V_{i} \rdb} $, where 
		$$ c_{k} =  \begin{cases}
		{rs \over 2^{i^{2}}} \quad & \text{if }  0 \le r \le  2^{i^{2}} -1 ,\\
		{rs \over 2^{i^{2}}} + {s \over 2^{i^{2}}} \quad & \text{if }  2^{i^2} \le r \le 2 \cdot  2^{i^2} -1 
		\end{cases}    ,$$
		is such that $  d_{\bm{c}^{(i)}}\lc V_{i},0,\xi' \rc \le { 1\over 2^{i^{2}}}$ for every $\xi' \in \T$.   If this is the case, then one can easily check that, for every $\e \in (0,1)$, it holds that
		$$ W\lc \e \rc \le 4\cdot 2^{i'} \cdot {V_{i'+1} \over V_{i'}} \cdot V(\e) = O\lc \e^{-2}\cdot 2^{O\lc \sqrt{-\ln(\e)} \rc} \rc  ,$$
		where $ i'= i'(\e)$ is the unique index such that ${1\over 2^{i'^{2}}} \le \e \;< {1\over 2^{(i'-1)^{2}}} $.

		Let us then prove that for every $\xi'\in\T$, one has that $d_{\bm{c}}\lc V_{i},0,\xi' \rc \le {1 \over 2^{i^2}} $ . Set $u = i^{2}$. The first step is to show  that for every $\xi \in \T$ of the form 
		\begin{equation}\label{eqTheorSUDS1}
		\xi = {l \over 2^{u}} + {l' \over 2^{2u}}, \quad l \in \ldb 0,2^{u}-1 \rdb, \quad l'\in \ldb 2^{u} \rdb,
		\end{equation}
		it holds that $ d_{\bm{c}^{(i)}}\lc V_{i}, 0, \xi \rc \le {1 \over 2 \cdot 2^{u}} $.  To see this, fix such $\xi \in \T$ and fix also $\gamma \in \T$. If $l$ is odd, then for every $k'$ of the form  $k' = (l'-1)\cdot 2^{u} +j \in \ldb (l'-1)\cdot 2^{u} +1, l' \cdot 2^{u} \rdb$, one has
		\begin{align*}
		\ldav c_{k'} - k'\xi - \gamma \rdav &= \ldav { (l'-1) \cdot l' \over 2^{u}} + \gamma + {jl \over 2^{u}} \rdav .
		\end{align*} 
		Since $l$ is odd, one can find $j_{0} \in \ldb 2^{u} \rdb$ such that 
		$$ \ldav { (l'-1) \cdot l' \over 2^{u}} + \gamma + {j_{0}\cdot l \over 2^{u}} \rdav \le {1 \over 2^{u+1}}.$$
		Similarly, if $l$ is even, then  for every $k'$ of the form 
		$$k' = \lc 2^{u} + l'-1 \rc\cdot 2^{u} + j \in \ldb \lc 2^{u} + l'-1 \rc \cdot 2^{u} +1, \lc 2^{u} +l' \rc \cdot 2^{u} \rdb ,$$
		one has 
		$$ \ldav c_{k'} - k'\xi - \gamma \rdav = \ldav {\lc 2^{u} +l' -1 \rc \cdot l' \over 2^{u} } + \gamma + {j\cdot (l-1) \over 2^{u}} \rdav .$$
		Since $l-1$ is odd, there is a choice of $j_{0} \in \ldb 2^{u} \rdb$ such that 
		$$ \ldav {\lc 2^{u} +l' -1 \rc \cdot l' \over 2^{u} } + \gamma + {j_{0}\cdot (l-1) \over 2^{u}} \rdav \le {1 \over 2^{u+1}} .$$
		
		Fix now any $\xi \in \T$. Then, there exists $\xi = {l \over 2^{u} } + {l' \over 2^{2u}} $  such that $\ldav \xi'- \xi \rdav\le {1 \over 2^{2u+1}} $. Therefore, setting $m_{0} = l' -1$ if $l$ is odd and $ m_{0} = 2^{u} +l' -1 $ if $l$ is even, there exists $j_{0} \in \ldb 2^{u} \rdb$ such that the integer $k = m_{0}\cdot 2^{u} + j_{0} $ satisfies the relation
		$$ \ldav  c_{k} - k \xi - m_{0}2^{u}\cdot \lc \xi' - \xi \rc  - \gamma \rdav \le {1\over 2^{u+1}} \cdot$$
		From the Triangle Inequality,
		\begin{align*}
		\ldav c_{k} - k\xi' - \gamma \rdav &= \ldav c_{k} - k\xi - k\lc \xi' - \xi \rc - \gamma \rdav \\
		&\le \ldav c_{k} - k\xi - \gamma - m_{0}\cdot 2^{u} \cdot \lc \xi' - \xi \rc  \rdav + j_{0}\cdot \ldav \xi' - \xi \rdav \\
		&\le { 1\over 2^{u+1}} + { 2^{u} \over 2^{2u +1} } \le {1 \over 2^{u}} .
		\end{align*}
		Thus,  $ d_{\bm{c}^{(i)}}\lc V_{i},0,\xi \rc \le {1 \over 2^{i^2}}$. Consequently, the sequence $\bm{u} =\lc u_{n} \rc_{n\in \N}$ defined in \eqref{eqBinarySequence1} is $W$-super uniformly dispersed. The resulting explicit construction of a dense forest with the claimed visibility bound follows from \cite[p. 18, Theorem 8]{AdiceamAroundTheDanzerProblem}, which furthermore implies that the forest $\mf{F}(\bm{u})$ defined in equation \eqref{eqPeresForest} has visibility $O(W)$. The proof of the theorem is complete. $\blacksquare$
	\end{proof}

\section{Concluding Remarks and Open Problems}

\paragraph{}
Some questions arise naturally from this work.

\begin{enumerate}
	\item Theorem \ref{TheorConvergenceConstructions} provides a strong sufficient condition for the existence of dense forests with a given visibility. Is this condition necessary? A positive answer to this question immediately implies a negative answer to Danzer's problem. 
	
	\item   Can one construct a \textit{deterministic} $V$- super uniformly dispersed sequence with $V\lc \e\rc = O\lc \e^{-2} \rc$? In \cite{TsokanosThesis}, the author uses a \textit{probabilistic} argument in order to prove the existence of $V$-super uniformly dispersed sequences with $V\lc \e \rc = O\lc \e^{-1} \cdot 2^{O\lc \sqrt{-\ln(\e)}\rc} \rc$.

\end{enumerate}

	\end{document}